\documentclass[10pt,reqno]{amsart}

\usepackage[T1]{fontenc}
\usepackage{lmodern}
\usepackage{microtype} 
\usepackage{setspace}
\usepackage[usenames,dvipsnames]{xcolor}
\usepackage{tikz}
\usetikzlibrary{positioning, arrows.meta}
\usepackage{comment}

\usepackage[margin=1.25in]{geometry} 
\usepackage{mathtools}
\usepackage{amsmath,amsfonts,amsbsy,amsgen,amscd,mathrsfs,amssymb,amsthm}
\usepackage[colorlinks=true,citecolor=blue,linkcolor=blue,urlcolor=blue]{hyperref}
\usepackage{prettyref}
\newrefformat{sec}{Section~\ref{#1}}
\newrefformat{conj}{Conjecture~\ref{#1}}
\newrefformat{thm}{Theorem~\ref{#1}}
\newrefformat{lem}{Lemma~\ref{#1}}
\newrefformat{def}{Definition~\ref{#1}}
\newrefformat{eq}{(\ref{#1})}

\makeatletter
\renewcommand\paragraph{\@startsection{paragraph}{4}{\z@}%
  {1.5ex \@plus .2ex \@minus .2ex}%
  {-1em}%
  {\normalfont\normalsize\bfseries}}
\makeatother

\newtheorem{theorem}{Theorem}[section]
\newtheorem{lemma}[theorem]{Lemma}
\newtheorem{conjecture}{Conjecture}

\theoremstyle{definition}
\newtheorem{definition}[theorem]{Definition}
\theoremstyle{remark}
\newtheorem{remark}[theorem]{Remark}

\numberwithin{equation}{section}

\newcommand{\R}{\mathbb{R}}
\newcommand{\tr}{\text{tr} }

\newcommand{\norm}[1]{\left\| #1 \right\|}

\begin{document}

\title[On Brouwer's Laplacian conjecture]{On Brouwer's Laplacian conjecture}

\author[Kothari]{Pravesh K. Kothari}
\author[Tudose]{Stefan Tudose}

\address{Department of Computer Science, Princeton University, Princeton, NJ 08544, USA}
\email{kothari@cs.princeton.edu}

\address{Department of Mathematics, Princeton University, Princeton, NJ 08544, USA}
\email{studose@princeton.edu}

\begin{abstract}
Brouwer's Laplacian conjecture states that the sum of the largest $k$ eigenvalues of a graph's Laplacian is less than or equal to the number of edges plus $\binom{k+1}{2}$. We give a proof of this conjecture. Our proof relies on the Grone--Merris--Bai theorem for \emph{split} graphs. We also show the converse implication, giving an equivalence between Brouwer's conjecture and the Grone--Merris--Bai theorem. 
\end{abstract}

\maketitle

\section{Introduction}
\label{sec:intro}
Let $G$ be a simple graph on $n$ vertices and $m$ edges. Let $A$ be its $n \times n$ adjacency matrix and let $D$ be the diagonal matrix with $D(i,i) = d_i$, the degree of vertex $i$. The Laplacian of $G$ is the positive semidefinite matrix $L = D-A$. In the following, we write $\lambda_1\ge \lambda_2\ge \ldots\ge \lambda_n=0$, for the eigenvalues of the Laplacian of a graph $G$. 

Brouwer conjectured \cite{BrouwerHaemers2012,HaemersMohammadianTayfehRezaie2010} the following bound on the sum of top $k$ eigenvalues of the Laplacian of a graph $G$:
\begin{conjecture}[Brouwer's Laplacian conjecture] \label{conj:brouwer}
    The following inequality holds:
    $$\lambda_1+\ldots+\lambda_k\le m+\dbinom{k+1}{2},$$
    for any $1\le k\le n$. 
\end{conjecture}

Brouwer's conjecture is a basic problem in spectral graph theory and has attracted substantial attention since its formulation by Brouwer and Haemers \cite{BrouwerHaemers2012,HaemersMohammadianTayfehRezaie2010}. There are two main kinds of results known about the problem. The first kind proves the conjecture for special classes of graphs\cite{HaemersMohammadianTayfehRezaie2010,FritscherHoppenRochaTrevisan2011,DuZhou2012,WangHuangLiu2012,Mayank2010,HelmbergTrevisan2017,TorresTrevisan2024} or under additional structural assumptions such as bounds on girth, vertex cover, diameter and other such graph parameters~\cite{GanieAlghamdiPirzada2016,Chen2018,Chen2019,GaniePirzadaRatherTrevisan2020,Cooper2021}. The second line of work proves \emph{approximate} results that hold for every graph\cite{Cooper2021,LewPartitionDensity2024,LewSumsDegrees2025,LewApproximate2026}. Recent work has also studied closely related Kikuchi-graph and token-graph viewpoints, with applications to Brouwer-type eigenvalue bounds and bounds on the ground state energy of certain quantum Hamiltonians~\cite{LewApproximate2026,BakshiBasuKothariLi2026}. 

Brouwer was motivated by a related conjecture of Grone and Merris about the relationship between the Laplacian spectrum of a graph and its degree sequence. The main observation is that for \emph{threshold} graphs, the Laplacian eigenvalues, written in decreasing order, are exactly equal to the \emph{conjugate degree sequence}~\cite{GroneMerrisSunder1990,Merris1994,HammerKelmans1996}. 

\begin{definition}[Conjugate Degree Sequence]
For a graph $G$, let $d_k'$ be the number of vertices in $G$ of degree at least $k$. Then, $(d_1', d_2', \cdots, d_n')$ is called the \emph{conjugate} degree sequence of $G$. 
\end{definition}

Grone and Merris conjectured that the threshold-graph comparison is an extreme case and that in fact for any graph the Laplacian eigenvalue sequence is \emph{majorized} by the conjugate degree sequence \cite{GroneMerris1994}. Recall that a sequence $a=(a_1,\ldots, a_n)$ is majorized by a sequence $b=(b_1,\ldots, b_n)$ if the inequality  $\displaystyle\sum_{i=1}^j a_i \leq \sum_{i=1}^j b_i$  is true for any $1\le j\le n-1$ and $a_1+\ldots+a_n=b_1+\ldots+b_n$.

\begin{theorem}[Grone-Merris-Bai]
The following inequality holds
$$\lambda_1+\ldots+\lambda_k\le d_1'+\ldots+d_k',$$
for any $1\le k\le n$. 
\end{theorem}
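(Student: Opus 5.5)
The plan is to follow Bai's strategy: reduce to a variational statement and induct on the number of edges. By Ky Fan's principle,
$$\lambda_1+\ldots+\lambda_k=\max_{P}\ \tr(PL),$$
where $P$ ranges over orthogonal projections of rank $k$. Since $L=\sum_{uv\in E}(e_u-e_v)(e_u-e_v)^T$, we get $\tr(PL)=\sum_{uv\in E}\|P(e_u-e_v)\|^2$. Hence it suffices to show that, for every rank-$k$ projection $P$, the sum of these edge contributions is at most $d_1'+\ldots+d_k'$.

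On the right-hand side, recall that $d_1'+\ldots+d_k'=\sum_{v}\min(d_v,k)$. This suggests assigning each edge's weight $\|P(e_u-e_v)\|^2$ to its endpoints. The goal is to show that the load charged to each vertex can be bounded by $\min(d_v,k)$, or at least that the total is bounded. The trivial bound $\|P(e_u-e_v)\|^2\le 2$ only gives $2m=\sum_v d_v$, which is the right answer when all degrees are at most $k$. The nontrivial part is vertices of degree exceeding $k$.

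For the induction, I would fix an extremal $P$ and pick a vertex $v$ of degree greater than $k$. I would then perform a Merris-type edge transfer: move an edge $vw$ to $uw$ where $d_u\ge d_v$. This keeps the degree sum fixed and makes the conjugate sequence majorize the old one, so the right-hand side does not decrease. The key requirement is that, for a suitable choice, the left-hand side $\max_P\tr(PL)$ does not decrease either. Iterating such shifting operations converges to a threshold graph, where equality is known by the result of Grone, Merris and Sunder cited above. This would prove the theorem.

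The main obstacle is proving monotonicity of the Ky Fan sum under shifting, since eigenvalue sums need not increase under an arbitrary edge move. My first attempt would average two candidate moves. Given the optimal $P$ for $G$, consider $G_1$, obtained by moving $N(v)\setminus N[u]$ to $u$, and $G_2$, obtained by moving $N(u)\setminus N[v]$ to $v$. I would try to show that $\tr(PL_G)\le \tfrac12(\tr(PL_{G_1})+\tr(PL_{G_2}))$. Both $G_1$ and $G_2$ are closer to threshold, and both have conjugate sequences majorizing that of $G$. The inequality should reduce to a convexity identity in the quantities $\|P(e_u-e_w)\|^2$ and $\|P(e_v-e_w)\|^2$, via the parallelogram law. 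If that fails, I would fall back on Bai's original route: reduce to split graphs and handle those with an explicit interlacing argument.
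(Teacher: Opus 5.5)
Your proposal has a genuine gap at the one step that carries all the content: that the Ky Fan sum does not decrease under shifting. The averaging inequality you propose for this step is false. Let $\sigma$ be the transposition of $u$ and $v$, with permutation matrix $\Pi$. Compare edge multisets, splitting the other vertices into common neighbours of $u,v$, $A=N(v)\setminus N[u]$ and $B=N(u)\setminus N[v]$. This gives the exact identity $L_{G_1}+L_{G_2}=L_G+L_{\sigma G}=L_G+\Pi L_G\Pi$. Hence, for the optimal $P$ of $G$,
\[
\tfrac12\bigl(\tr(PL_{G_1})+\tr(PL_{G_2})\bigr)=\tfrac12\tr(PL_G)+\tfrac12\tr(\Pi P\Pi L_G)\le \tr(PL_G)
\]
by Ky Fan. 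The inequality is strict unless $\Pi P\Pi$ is also optimal, so no parallelogram-law computation can give the direction you want. Concretely, take the path $P_4$ with vertices $1,2,3,4$ in order, $u=2$, $v=3$, $k=1$: then $\tr(PL_G)=2+\sqrt2$, but $\tr(PL_{G_1})=\tr(PL_{G_2})=2+\sqrt2/2$.

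Worse, Ky Fan sums are genuinely not monotone under such shifts. The same move $P_4\to K_{1,3}$ lowers $\lambda_1+\lambda_2$ from $4+\sqrt2$ to $5$. The only other shift of $P_4$ that changes its isomorphism type goes to $K_3\cup K_1$, and it lowers $\lambda_1$ from $2+\sqrt2$ to $3$. So a shifting proof would have to choose the move depending on $k$, and showing that a good move always exists is essentially the whole theorem. Your proposal gives no argument for this, and ``fall back on Bai's route'' only points at the hard part.

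You also have the right-hand side backwards. Moving degree from $v$ to $u$ with $d_u\ge d_v$ makes the new degree sequence majorize the old one. Conjugation reverses majorization, so $d_1'+\dots+d_k'$ does not \emph{increase}. That is actually what you need, since the chain has to read $\sum_{i\le k}\lambda_i(G)\le\sum_{i\le k}\lambda_i(T)=\sum_{i\le k}d_i'(T)\le\sum_{i\le k}d_i'(G)$. With the monotonicity you state, the chain would not close.

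For comparison, the paper does not prove this theorem from scratch either; it relies on Bai for split graphs. Its own argument (the proof that the split-graph trace inequality implies Grone--Merris--Bai) is an easy reduction from general graphs to split graphs:
for each $r$, take $Q$ to be the spectral projection of $L_G$ onto eigenvalues above $r$; make the $r$ highest-degree vertices a clique and delete the edges among the others, each deleted edge costing at most $2$ in $\tr(QL)$; this yields $\sum_i(\lambda_i-r)_+\le\sum_{i>r}d_i$; finally minimize $rk+\sum_{i>r}d_i$ over $r$.
The split-graph inequality itself is exactly what your proposal leaves unproved.
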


The Grone-Merris conjecture was proved in full by Bai \cite{Bai}. Before Bai's proof, Stephen proved several special cases~\cite{Stephen2007}, including all trees, and gave an equivalent formulation in terms of Dirichlet Laplacians. Later work of Abiad, Fiol, Haemers, and Perarnau~\cite{AbiadFiolHaemersPerarnau2014} developed related eigenvalue interlacing bounds for sums of the largest Laplacian eigenvalues. We note that the Grone-Merris conjecture has also inspired analogs for simplicial complexes, notably, the Duval--Reiner conjecture \cite{DuvalReiner2002}.

In this paper, we prove Brouwer's conjecture using the Grone-Merris-Bai theorem. We also show a simple proof of the Grone-Merris-Bai theorem from Brouwer's conjecture showing that the two statements are in a sense equivalent. Curiously, our proof of Brouwer's theorem invokes the Grone-Merris bound for ``split graphs'' that also arise as the main technical case in the proof of~\cite{Bai}.

\section{Preliminaries} Our proof of Brouwer's conjecture is based on a nuclear norm bound on ``centered'' Laplacian of \emph{split graphs}. Let us recall some relevant definitions and set up some basic notation. 

\begin{definition}[Split Graph]
A graph $G$ on vertex set $[n]$ is a \emph{split graph} if there is a subset $K \subseteq [n]$ such that the induced subgraph on $K$ is a clique and the induced subgraph on $S=[n] \setminus K$ is an independent set.
\end{definition}

\textbf{Notation:} Let $J=11^T$ be the all ones matrix and  $C=I_n-\frac{1}{n}J$ be the centering matrix. We denote by $\norm{A}_*$ the nuclear norm of the matrix $A$. For a graph $G$ and disjoint vertex sets $K,S$, we write $E(K,S)$ to denote the number of ``cut'' edges with one end point in $K$ and $S$ each.

\section{Results and Proof Plan}

We give a proof of Brouwer's conjecture in this work. 

\begin{theorem}[Brouwer's Conjecture]\label{thm:brouwer}
For any graph $G$ with Laplacian eigenvalues $\lambda_1 \geq \lambda_2 \geq \cdots \lambda_n\geq 0$, we have:
    $$\lambda_1+\ldots+\lambda_k\le m+\dbinom{k+1}{2},$$
    for any $1\le k\le n$.
\end{theorem}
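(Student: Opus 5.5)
The approach is to reduce to split graphs, where the Grone--Merris--Bai theorem gives Brouwer's bound almost immediately. First I would record this base case. If $H$ is a split graph with clique $K$, $|K|=k$, independent set $S$, and every vertex of $K$ has degree at least $k$, then
\[
\sum_{j\le k} d_j'(H)=\sum_{v}\min(d_v,k)\le k\cdot k+\sum_{v\in S} d_v = k^2+E(K,S).
\]
Since $k^2=\binom{k}{2}+\binom{k+1}{2}$ and $m_H=\binom{k}{2}+E(K,S)$, the Grone--Merris--Bai theorem gives $\lambda_1+\dots+\lambda_k\le m_H+\binom{k+1}{2}$. The same count, with $\min(d_v,k)\le k$ on $K$, shows that split graphs satisfy Brouwer's bound whenever the clique has size exactly $k$.

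For general $G$, I would use Ky Fan's principle. Write $\lambda_1+\dots+\lambda_k=\tr(PL_G)$ for a rank-$k$ orthogonal projection $P$, and expand
\[
\tr(PL_G)=\sum_{ij\in E}(e_i-e_j)^TP(e_i-e_j)=\sum_{ij\in E}\big(P_{ii}+P_{jj}-2P_{ij}\big).
\]
Each edge contributes at most $(\sqrt{P_{ii}}+\sqrt{P_{jj}})^2$, and in total it contributes at most $2$. The aim is to charge every edge $1$ to the term $m$, and to bound the excess $\sum_e(\tr(PL_e)-1)$ by $\binom{k+1}{2}$. For this I would split the vertex set using the diagonal of $P$, whose entries lie in $[0,1]$ and sum to $k$. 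Let $K$ be the set of vertices with large $P_{ii}$, and $S$ the rest. Edges inside $S$ have excess at most $0$ if the threshold is $1/4$. Hence deleting them costs nothing relative to $m$, and we may assume $S$ is independent. Edges inside $K$ may be added, turning $K$ into a clique; this only increases $\tr(PL)$, because each $L_e\succeq 0$. That reduces to a split graph $H$, to which the base case applies through Ky Fan: $\tr(PL_H)\le\lambda_1(H)+\dots+\lambda_k(H)$.

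The main obstacle is the bookkeeping on $K$. Completing $K$ to a clique adds edges to $m$ that $G$ does not have. The threshold $1/4$ also gives $|K|<4k$ rather than $|K|=k$, which worsens the split-graph count to $\min(d_v,k)\le k$ over a larger $K$. My first attempt would be to recentre: replace $L$ by $CLC$ and work with the nuclear norm $\norm{\,\cdot\,}_*$ of the centred Laplacian restricted to the $K\times K$ block, as the preliminaries suggest. I would then choose $K$ adaptively, for example as the $k$ vertices with largest $P_{ii}$, possibly after a rotation of $P$ within its range. The hope is that the clique edges added inside $K$ are exactly paid for by the slack $\binom{k+1}{2}-\sum_{e\subseteq K}(\tr(PL_e)-1)$. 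I would also check that edges between $K$ and $S$ with small $P_{jj}$ are absorbed by the $E(K,S)$ term of the Grone--Merris--Bai count. If a fixed threshold does not close this, I would instead optimise over thresholds $t$ and average the resulting inequalities over $t$.
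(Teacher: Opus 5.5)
Your proposal has a genuine gap: the reduction from a general $G$ to a split graph does not go through, so the theorem is not yet proved.

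\textbf{What is fine.} The base case is sound. The size of $K$ is not actually an obstacle. For a split graph with a clique of any size $r$, Grone--Merris--Bai gives $\sum_{i\le k}\lambda_i\le rk+E(K,S)$. Moreover, $rk\le\binom{r}{2}+\binom{k+1}{2}$ is equivalent to $(r-k)(r-k-1)\ge 0$, which always holds. Deleting the edges inside $S=\{i:P_{ii}\le 1/4\}$ is also valid.

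\textbf{Where it breaks.} The failing step is completing $K$ to a clique. What you must control is the excess $\mathrm{tr}(PL_G)-m_G$, not $\mathrm{tr}(PL_G)$. Each added pair $ij$ changes the excess by $\|P(e_i-e_j)\|^2-1$. This equals $-1$ whenever $Pe_i=Pe_j$, which is compatible with $P_{ii}=P_{jj}>1/4$. For example, take $k=1$, $n=10$, $P=uu^T$ with $u=(4,4,-1,\dots,-1)/\sqrt{40}$, and the pair $12$.

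So the inequality $\mathrm{tr}(PL_G)-m_G\le \mathrm{tr}(PL_H)-m_H$ that your transfer needs is false in general. Nothing in the proposal controls the size of this loss. Put differently, for fixed $P$ the worst graph is the set of pairs with positive excess. Your threshold shows $S$ is independent in that graph, but its structure on $K$ is arbitrary, and that is exactly the case left open. The remedies you list are not carried out. Also, ``rotating $P$ within its range'' does nothing, since $P$ is determined by its range.

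\textbf{How the paper closes this.} The paper never adds the missing clique edges to $G$.
\begin{enumerate}
\item It reduces the theorem to $\sum_{i\ne j}(P_{ii}+P_{jj}-2P_{ij}-1)_+\le k(k+1)$.
\item It uses the identity $\frac14\sum_{i\ne j}[(P_{ii}+P_{jj}-2P_{ij})^2-(P_{ii}-P_{jj})^2]=k(k+1)$ together with $(x-1)_+=\frac14\bigl(x^2-(1-|x-1|)^2\bigr)$. This turns the target into $\sum_{i\ne j}(1-|M_{ij}|)^2\ge\frac2n\|v\|^2$, where $v=nC(P_{11},\dots,P_{nn})^T$.
\item It derives that inequality by Cauchy--Schwarz from the cut inequality $\langle x,v\rangle\le\sum_{i<j}(1-|M_{ij}|)|x_i-x_j|$. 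By telescoping, only the level cuts $\{1,\dots,r\}$ need to be checked, for every $r=1,\dots,n-1$.
\item Each cut is handled by the split graph $G_r$: a clique on the top $r$ vertices, with cross edges where $M_{ij}\ge 0$. The paper proves the identity $\sum_{i\le r}v_i+\sum_{i\le r<j}|M_{ij}|=-\mathrm{tr}\bigl((L_{G_r}-rC)N\bigr)$, with $N=C-2P$ and $\|N\|=1$. It then applies the Grone--Merris--Bai consequence $\|L_{G_r}-rC\|_*\le r(n-r)$.
\end{enumerate}

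\textbf{What you are missing.} In the paper, the clique edges are paid for by the centring term $rC$ and tested against $N$ by duality, not by Ky Fan on an enlarged graph. Your fallback idea of thresholding on $P_{ii}$ and averaging over thresholds is close in spirit to the case $x=v$. The missing ideas are the per-threshold inequality above and the quadratic identity that lets the thresholds recombine into Brouwer's bound.
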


Our proof relies on a bound on the nuclear norm of the ``centered'' Laplacians of split graphs formulated below.    

\begin{lemma}[Centered Split Graph Nuclear Norm Bound]\label{lem:nuclearnormresult}
    Let $H$ be a split graph which has a clique $K$ and an independent set $S$. The following inequality holds
    $$\norm{L_H-|K|C}_*\le |K|\cdot |S|.$$
\end{lemma}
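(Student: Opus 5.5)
The plan is to reduce the nuclear norm to a one-sided sum of eigenvalue excesses, and then bound that sum with the Grone--Merris--Bai theorem applied to $H$ itself.

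\textbf{Step 1: spectrum of $L_H-|K|C$.} Write $k=|K|$, $s=|S|$, $n=k+s$, and $e=E(K,S)$. Both $L_H$ and $C$ annihilate $\mathbf 1$, and $C$ acts as the identity on $\mathbf 1^\perp$. Hence $L_H$ and $C$ are simultaneously diagonalizable. The eigenvalues of $M=L_H-kC$ are therefore $0$ (on $\mathbf 1$) together with $\lambda_i-k$ for $i=1,\dots,n-1$, where $\lambda_1\ge\dots\ge\lambda_{n-1}$ are the remaining Laplacian eigenvalues of $H$. Since $M$ is symmetric,
$$\norm{M}_*=\sum_{i=1}^{n-1}|\lambda_i-k|.$$

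\textbf{Step 2: trace identity.} Put $x_i=\lambda_i-k$. Then $\sum_i|x_i| = 2\sum_{x_i>0}x_i-\sum_i x_i$. Because $H$ has $\binom{k}{2}+e$ edges,
$$\sum_{i=1}^{n-1} x_i = \operatorname{tr} L_H - k(n-1) = k(k-1)+2e-k(n-1)=2e-ks.$$
So the claim $\norm{M}_*\le ks$ is equivalent to
$$\sum_{i:\lambda_i>k}(\lambda_i-k)\le e.$$

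\textbf{Step 3: bound via Grone--Merris--Bai (the main step).} Let $j$ be the number of eigenvalues exceeding $k$, so $j\le n-1$. The Grone--Merris--Bai theorem gives
$$\sum_{i\le j}\lambda_i\le\sum_{i\le j}d'_i,$$
and it then suffices to show $\sum_{i\le j}(d'_i-k)\le e$.

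Split $d'_i$ into clique vertices and vertices of $S$ with degree at least $i$. The clique part is at most $k$. Writing $a_i=\#\{v\in S:\deg v\ge i\}$, this gives $d'_i\le k+a_i$ for every $i$. Therefore
$$\sum_{i\le j}(d'_i-k)\le\sum_{i\ge1}a_i=\sum_{v\in S}\deg v=e,$$
where the last equality holds because $S$ is independent, so every edge at a vertex of $S$ is a cut edge. This closes the argument.

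The only real content is invoking Grone--Merris--Bai on the split graph $H$; the key point is the pointwise comparison $d'_i\le k+a_i$. I expect the main care needed to be in Step 1, confirming that the eigenvalue $0$ on $\mathbf 1$ is handled correctly. Since $j\le n-1$, no index issue arises.
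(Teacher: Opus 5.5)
Your proof is correct and follows essentially the same route as the paper: you use the trace identity to reduce $\norm{L_H-|K|C}_*\le |K|\cdot|S|$ to $\sum_{\lambda_i>|K|}(\lambda_i-|K|)\le E(K,S)$, and then apply Grone--Merris--Bai to $H$ to get $\sum_{i\le j}d_i'\le j|K|+E(K,S)$. Your pointwise comparison $d_i'\le |K|+a_i$ is just the column-by-column form of the paper's double-counting identity $\sum_{i\le j}d_i'=\sum_{v}\min(d_v,j)$.
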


We will prove \prettyref{lem:nuclearnormresult} using the Grone-Merris-Bai theorem. We need the Grone-Merris-Bai theorem only for the case of split graphs but note that split graphs are the main ``hard'' case in Bai's proof of the Grone-Merris conjecture.

\paragraph{From Brouwer to Grone-Merris-Bai:} We next show that Brouwer's conjecture implies the following trace inequality for all split graphs.  

\begin{lemma}[Split Graph Trace Inequality]\label{lem:traceresult}
    Let $H$ be a split graph which has a clique $K$ and an independent set $S$. For any orthogonal projection $Q$ in $\R^n$ with $Q1=0$ the following inequality holds
    $$\mathrm{\tr} \left (Q(L_H-|K|I_n)\right )\le E(K,S)$$
\end{lemma}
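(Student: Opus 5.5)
The plan is to derive the trace inequality from \prettyref{thm:brouwer}, applied to a suitably chosen auxiliary graph. The idea is to write the left-hand side as a sum of top eigenvalues plus explicit correction terms and then compare binomial coefficients. I have checked the first two steps below; the auxiliary graph in the third step is a proposal I have not verified.

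\textbf{First step: apply \prettyref{thm:brouwer} directly to $H$.} Let $k=\operatorname{rank} Q$. By Ky Fan's maximum principle, $\tr(QL_H)\le \lambda_1+\cdots+\lambda_k$, and $\tr(Q\cdot |K|I_n)=k|K|$. Since $m=\binom{|K|}{2}+E(K,S)$, \prettyref{thm:brouwer} gives
$$\tr\bigl(Q(L_H-|K|I_n)\bigr)\le E(K,S)+\binom{|K|}{2}+\binom{k+1}{2}-k|K| = E(K,S)+\binom{a}{2},\qquad a=|K|-k .$$
So the direct application is off by exactly $\binom{a}{2}$. It already proves the lemma when $k\in\{|K|-1,|K|\}$.

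\textbf{Second step: pass to the complement.} The complement $\bar H$ is again split, with clique $S$ and independent set $K$. On $1^\perp$ we have $L_H=nI-L_{\bar H}$, so $\tr(Q(L_H-|K|I))=k|S|-\tr(QL_{\bar H})$. The trace $\tr(QL_{\bar H})$ is bounded below by $2\bar m$ minus the sum of the top $n-1-k$ eigenvalues of $L_{\bar H}$. Applying \prettyref{thm:brouwer} with index $n-1-k$ yields the same defect $\binom{a}{2}$, just read from the other side. So neither orientation alone suffices, and the real work is to remove the defect.

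\textbf{Third step: choose an auxiliary graph in which the clique size matches $k$.} Here is what I would try first.
\begin{itemize}
\item If $k<|K|-1$, delete the clique edges inside a subset $K_2\subset K$ of size $a$, so that the clique becomes $K_1=K\setminus K_2$ with $|K_1|=k$. The defect then vanishes for the new split graph $H'$.
\item One must control the error $\tr(QL_{K_2\text{-clique}})=a\,\tr(QP_{K_2})-1_{K_2}^TQ1_{K_2}$. The choice of $K_2$ should depend on $Q$: pick the $a$ vertices of $K$ with the smallest diagonal entries $Q_{ii}$.
\item I would then try to absorb the error using the cut edges counted in $E(K_2,S\cup K_1)$ and the fact that $\sum_i Q_{ii}=k$.
\item If $k>|K|$, the symmetric move is to pass to $\bar H$ and shrink its clique $S$ in the same way.
\end{itemize}
An alternative is to pad $H$ with new vertices forming a larger clique (or independent set) and extend $Q$ to these vertices nontrivially, not by zero. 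The goal is that the padded split graph has clique size $k$ or $k+1$, so that Brouwer's bound has no defect.

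\textbf{Main obstacle.} The hard step is this error control. A naive accounting of the removed clique edges gives a bound like $a\min(a,k)$, and this is not dominated by $E(K_2,S)$. Closing the argument will likely require either a $Q$-adapted choice of the modified vertex set, or averaging the Brouwer inequality over all choices of $K_2$. The averaging uses that the average of $\tr(QP_{K_2})$ over subsets is $ka/|K|$ times the fraction of $Q$'s mass on $K$, and this should exactly cancel $\binom{a}{2}$.
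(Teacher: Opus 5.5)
There is a genuine gap. Your first two steps are correct: the defect $\binom{|K|-k}{2}$ is computed correctly, and so is its reappearance in the complement. But the third step carries the whole content of the lemma, and as proposed it cannot work.

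\textbf{Why the third step fails.} Delete the clique on $K_2$ and apply \prettyref{thm:brouwer} to $H'$ at index $k$. This yields exactly
\[
\operatorname{tr}\bigl(Q(L_H-|K|I_n)\bigr)\le E(K,S)+\operatorname{tr}(QL_{K_2}),
\]
where $L_{K_2}$ is the Laplacian of the deleted clique. You have traded the defect $\binom{a}{2}$ for the error $\operatorname{tr}(QL_{K_2})=\sum_{i<j\in K_2}\norm{Q(e_i-e_j)}^2\ge 0$. This error sits on the wrong side and is generally positive. Nothing is left to absorb it, since the cut edges at $K_2$ are already counted in $m'$. Averaging over $K_2$ only averages a nonnegative loss, so it cannot cancel anything. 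The padding alternative is not specified enough to evaluate.

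\textbf{The missing idea.} Change the projection rather than the graph. Write $r=|K|$, $s=|S|$, $q=\operatorname{rank}Q$, and first suppose $q<r$.
\begin{itemize}
\item The space $\mathcal K=\{x:\ x|_S=0,\ \langle x,1_K\rangle=0\}$ has dimension $r-1$. Hence $\mathcal K\cap(\operatorname{Im}Q)^\perp$ contains a subspace $\mathcal L$ of dimension $r-1-q$.
\item For $x\in\mathcal K$, the clique edges alone give $\langle x,L_Hx\rangle\ge\sum_{i<j\in K}(x_i-x_j)^2=r\norm{x}^2$. So $\operatorname{tr}(Q_{\mathcal L}L_H)\ge r(r-1-q)$.
\item $Q+Q_{\mathcal L}$ is a rank-$(r-1)$ projection killing $1$. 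At index $r-1$ your defect vanishes, so Ky Fan plus \prettyref{thm:brouwer} give $\operatorname{tr}\bigl((Q+Q_{\mathcal L})L_H\bigr)\le r(r-1)+E(K,S)$.
\item Subtracting the previous bound yields $\operatorname{tr}(QL_H)\le rq+E(K,S)$, which is the claim.
\end{itemize}

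\textbf{The case $q\ge r$.} This follows from your Step 2, made precise. The matrix $\bar Q=C-Q$ is a projection with $\bar Q1=0$ and rank $n-1-q\le s-1<s$. A direct computation gives
\[
\operatorname{tr}\bigl(\bar Q(L_{\bar H}-sI_n)\bigr)=\operatorname{tr}\bigl(Q(L_H-rI_n)\bigr)+rs-2E(K,S).
\]
So the claim for $(Q,H)$ is equivalent to the claim for $(\bar Q,\bar H)$. There the cut has $rs-E(K,S)$ edges and the clique $S$ exceeds the rank of $\bar Q$, so the first case applies. No graph surgery is needed.
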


This inequality easily allows us to recover a proof of the Grone-Merris-Bai theorem.

Here is the structure of the paper:
\begin{center} 
\begin{tikzpicture}[
    mybox/.style={
        rectangle,
        draw=blue!60!black,         
        fill=blue!5!white,          
        rounded corners=3pt,
        thick,
        minimum width=3.8cm,        
        minimum height=1.6cm,       
        text width=4cm,           
        align=center,
        font=\footnotesize,         
        node distance=1.2cm and 1.5cm 
    },
    arrow/.style={
        -{Stealth[scale=1.0]},      
        thick,
        draw=gray!80!black          
    }
]

    
    \node[mybox] (NW) {
        $\lambda_1+\dots+\lambda_k \le d_1'+\dots+d_k'$\\[0.3em]
        \scriptsize Grone-Merris-Bai
    };
    
    \node[mybox, right=of NW] (NE) {
         $\|L_H-|K|C\|_* \le |K|\cdot |S|$\\[0.3em]
        \scriptsize \prettyref{lem:nuclearnormresult}
    };
    
    \node[mybox, below=of NW] (SW) {
        $\mathrm{tr} \left(Q(L_H-|K|I_n)\right) \le E(K,S)$\\[0.3em]
        \scriptsize \prettyref{lem:traceresult}
    };
    
    \node[mybox, right=of SW] (SE) {
        $\lambda_1+\dots+\lambda_k \le m+\binom{k+1}{2}$\\[0.3em]
        \scriptsize Brouwer's Laplacian conjecture
    };

    \draw[arrow] (NW) -- (NE);  
    \draw[arrow] (NE) -- (SE);  
    \draw[arrow] (SE) -- (SW);  
    \draw[arrow] (SW) -- (NW);  

\end{tikzpicture}
\end{center}

\section{All but Brouwer}
We defer the proof that \prettyref{lem:nuclearnormresult} implies Brouwer's conjecture to the next section and present simple proofs of the remaining implications in this section.

\begin{proof}[Grone-Merris-Bai implies \prettyref{lem:nuclearnormresult}]
    Let $r=|K|,\ s=|S|,\ n=r+s$ and let $\lambda_1\ge \ldots\ge\lambda_n$ be the eigenvalues of the Laplacian $L_H$ of $H$. 
    By Grone-Merris-Bai, for any $1\le k\le n$ we have the inequality
    $$\sum\limits_{i=1}^{k}\lambda_i\le \sum\limits_{i=1}^{k}d_i'=\sum\limits_{i=1}^{n}\min(d_i,k)=\sum\limits_{i\in K}\min(d_i,k)+\sum\limits_{j \in S}\min(d_j,k)\le kr+E(K,S).$$
    The identity $\sum\limits_{i=1}^{k}d_i'=\sum\limits_{i=1}^{n}\min(d_i,k)$ follows by double counting the pairs $(i,j)$ with $ d_i\ge j,\ 1\le j\le k$.  

    Let \(t=\#\{i\le n-1:\lambda_i>r\}\). If \(t=0\), then
\(\sum_{\lambda_i>r}(\lambda_i-r)=0\). If \(t\ge 1\), the preceding estimate
with \(k=t\) gives
\[
\sum_{\lambda_i>r}(\lambda_i-r)
=
\sum_{i=1}^t \lambda_i-tr
\le E(K,S).
\]
    
    Since $1$ is an eigenvector with eigenvalue $0$ for both $L_H$ and $C$, we obtain
    $$\norm{L_H-rC}_*=\sum\limits_{i=1}^{n-1}|\lambda_i-r|=2\sum\limits_{\lambda_i>r}(\lambda_i-r)+rs-2E(K,S)\le rs.$$

\end{proof}

\begin{proof}[Brouwer's Laplacian conjecture implies \prettyref{lem:traceresult}]
    Let $r=|K|,\ s=|S|,\ n=r+s$ and $q=\text{rank}\ Q$.  Define $\overline{Q}=C-Q$ and let $\overline{H}$ be the complement graph of $H$. We use the complement to reduce to the case $\operatorname{rank} Q<r$. 

    Since $Q1=0$, we have $Q\preceq  C$, so $\overline Q=C-Q$ is again an
orthogonal projection and $\overline Q1=0$. Moreover $\operatorname{rank}\overline Q=n-1-q$.
If $q\ge r$, then
$\operatorname{rank}\overline Q=n-1-q\le n-1-r=s-1<s$,
so the complementary split graph has clique size \(s\) and falls into the
case $\operatorname{rank}\overline Q<s$.

    Now, observe that $\tr \left ( \overline{Q}(L_{\overline{H}}-sI_n)\right )=\tr \left (Q(L_H-rI_n)\right )+rs-2E(K,S)$. Thus, 
    $$\tr \left (Q(L_H-rI_n)\right )\le E(K,S) \Leftrightarrow \tr \left ( \overline{Q}(L_{\overline{H}}-sI_n)\right )\le rs-E(K,S).$$
    We can therefore assume without loss of generality that $\text{rank} \ Q=q<r$ by working with $(\overline{Q},\overline{H})$ instead of  $(Q,H)$ if necessary.
    
    Define the subspace $\mathcal{K}=\{x\in \R^n: \ \langle x,1_K\rangle =0,\ x|_{S}=0\}$. This is an $r-1$ dimensional subspace of $\R^n$. Let $\mathcal{L}$ be a $r-1-q$ dimensional subspace of $\mathcal{K}\cap (\text{Im} \ Q)^{\perp}$, and let $Q_{\mathcal{L}}$ be the orthogonal projection onto this subspace. 
    For $v\in \mathcal{K}$ we have
    $$\langle v,L_Hv\rangle = \sum\limits_{(i,j)\in E(H)}\norm{v_i-v_j}^2\ge \sum\limits_{(i,j)\in E(K)}\norm{v_i-v_j}^2=r\sum\limits_{i\in K}v_i^2-\norm{\sum\limits_{i\in K}v_i}^2=r\norm{v}^2.$$
    This in turn implies $$\tr(Q_{\mathcal{L}}L_H)\ge r\text{dim}\ \mathcal{L}=r\left (r-1-q\right). $$
    Since $Q+Q_{\mathcal L}$ is a projection of rank $r-1$, the trace of $L_H$ on its image is at most the sum of the largest $r-1$ eigenvalues of $L_H$. Thus, by Brouwer's inequality (\prettyref{thm:brouwer}), we obtain
    $$\tr\left ((Q+Q_{\mathcal{L}})L_H\right )\le \sum\limits_{i=1}^{r-1}\lambda_i(H)\le r(r-1)+ E(K,S), $$
    and the desired inequality follows.
\end{proof}

\begin{proof}[\prettyref{lem:traceresult} implies Grone-Merris-Bai]
     Order the vertices so that $d_1\ge d_2\ge \ldots \ge d_n$. We will show that
     $$\sum\limits_{i=1}^{n}(\lambda_i-r)_+\le \sum\limits_{i=r+1}^{n}d_i,$$
     for every $0\le r\le n$. The case $r=0$ follows from $\sum_i\lambda_i=2m=\sum_i d_i$. Let $1\le r\le n$.
     Let $Q$ be the projection on the eigenspace corresponding to the set of $>0$ eigenvalues of $L_G-rI_n$ so that
     $\sum\limits_{i=1}^{n}(\lambda_i-r)_+=\tr(Q(L_G-rI_n))$. 
     
     Let $H$ be the split graph obtained from $G$ by taking the set $K$ of the vertices with the largest $r$ degrees and making it a clique, taking the set $S$ of the vertices with the smallest $s=n-r$ degrees and making it an independent set, and keeping any edges in-between $K$ and $S$. We have
     $$\tr (QL_G)=\sum\limits_{(i,j)\in E(G)}\norm{Qe_i-Qe_j}^2\le 2e+\sum\limits_{(i,j)\in E(H)}\norm{Qe_i-Qe_j}^2=\tr(QL_H)+2e,$$
     where $e$ is the number of edges removed, i.e. the edges between the vertices with the smallest $s$ degrees. By \prettyref{lem:traceresult} we obtain
     $$\sum\limits_{i=1}^{n}(\lambda_i-r)_+=\tr(Q(L_G-rI_n))\le \tr(Q(L_H-rI_n))+2e\le E(K,S)+2e=\sum\limits_{i=r+1}^n d_i.$$
     We now use two elementary facts. First, $\sum_{i=1}^k\lambda_i\le rk+\sum_i(\lambda_i-r)_+$ for every $r$. Next, since the degrees are in the descending order, $\min_{0\le r\le n}\bigl(rk+\sum_{i=r+1}^n d_i\bigr)=\sum_i\min(d_i,k)$. We thus obtain the Grone-Merris-Bai inequality:
     $$\sum\limits_{i=1}^{k}\lambda_i\le \min_{0\le r\le n}\left (rk+\sum\limits_{i=1}^{n}(\lambda_i-r)_+\right )\le \min_{0\le r\le n}\left (rk+\sum\limits_{i=r+1}^{n}d_i\right )=\sum\limits_{i=1}^{k}\min(d_i,k)=\sum\limits_{i=1}^{k}d_i'.$$
\end{proof}
\section{Brouwer's Laplacian conjecture}
In this section, we will show that \prettyref{lem:nuclearnormresult} implies Brouwer's Laplacian conjecture. 
\begin{lemma}[Eigenvalue Sums Reduce to Projections]\label{lem:projectionreduction}
Let $G$ be a graph on $n$ vertices and $m$ edges which has Laplacian $L$ . Fix $1\le k\le n-1$. Suppose that every orthogonal projection $P$ of rank $k$ satisfying $P1=0$ obeys
\[
    \sum\limits_{(i,j)\in E(G)}(P_{ii}+P_{jj}-2P_{ij}-1)\le \dbinom{k+1}{2}.
\]
Then
\[
    \sum\limits_{i=1}^{k}\lambda_i(L)\le m+\dbinom{k+1}{2}.
\]
\end{lemma}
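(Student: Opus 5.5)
We will use Ky Fan's variational characterization of eigenvalue sums, together with the edge-sum formula for the quadratic form of the Laplacian.

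Since $L$ is symmetric with $L1=0$, take orthonormal eigenvectors $v_1,\dots,v_n$ with $Lv_i=\lambda_i v_i$, ordered so that $\lambda_1\ge\dots\ge\lambda_n$. We may choose $v_n=1/\sqrt n$.

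\textbf{Step 1: the projection.} For $1\le k\le n-1$, let $P=\sum_{i=1}^k v_iv_i^T$. This is an orthogonal projection of rank $k$, and $P1=0$ because $v_1,\dots,v_k$ are orthogonal to $v_n$. If $\lambda_{n}=0$ has multiplicity greater than one, the ordering still lets $v_n$ be the constant vector, so nothing changes. Then
\[
\sum_{i=1}^k\lambda_i=\tr(PL).
\]

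\textbf{Step 2: expand the trace edge by edge.} Write $L=\sum_{(i,j)\in E(G)}(e_i-e_j)(e_i-e_j)^T$. This gives
\[
\tr(PL)=\sum_{(i,j)\in E(G)}(e_i-e_j)^TP(e_i-e_j)=\sum_{(i,j)\in E(G)}(P_{ii}+P_{jj}-2P_{ij}),
\]
using the symmetry of $P$.

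\textbf{Step 3: apply the hypothesis.} Subtract $m$ by attaching $-1$ to each of the $m$ edges:
\[
\sum_{i=1}^k\lambda_i-m=\sum_{(i,j)\in E(G)}(P_{ii}+P_{jj}-2P_{ij}-1).
\]
By hypothesis the right-hand side is at most $\binom{k+1}{2}$, which is the claimed bound.

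The argument has no real obstacle. The only points needing care are that the chosen top-$k$ eigenprojection really satisfies $P1=0$, which the choice of $v_n$ handles, and that $k\le n-1$ so the projection is defined. The lemma is restricted to $k\le n-1$, so the case $k=n$ is not needed here.
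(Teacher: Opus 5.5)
Your proof is correct and is essentially the paper's argument: you take $P$ to be the projection onto the top $k$ eigenvectors, expand $\tr(PL)$ edge by edge via $L=\sum_{(i,j)\in E(G)}(e_i-e_j)(e_i-e_j)^T$, and then apply the hypothesis. Your explicit choice $v_n=1/\sqrt{n}$ is a clean way to guarantee $P1=0$ even when $0$ is a repeated eigenvalue; the paper gets the same conclusion by restricting $L$ to the invariant subspace $1^{\perp}$.
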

\begin{proof}
Since $L1=0$, the subspace $1^{\perp}$ is invariant under $L$ and the eigenvalues of the restriction of $L$ to $1^{\perp}$ are $\lambda_1,\ldots,\lambda_{n-1}$. Let $P$ be the projection matrix to the subspace spanned by the top $k$ eigenvectors of $L$. Then, $P$ is rank $k$ with $P1=0$ such that
\[
    \sum\limits_{i=1}^{k}\lambda_i(L)=\tr(PL).
\]
Using the decomposition
\[
    L=\sum\limits_{(i,j)\in E(G)}(e_i-e_j)(e_i-e_j)^T,
\]
we obtain
\[
    \tr(PL)
    =\sum\limits_{(i,j)\in E(G)}(e_i-e_j)^TP(e_i-e_j)
    =\sum\limits_{(i,j)\in E(G)}(P_{ii}+P_{jj}-2P_{ij}).
\]
The assumed projection estimate therefore gives
\[
    \sum\limits_{i=1}^{k}\lambda_i(L)
    \le m+\dbinom{k+1}{2}.
\]
\end{proof}

\begin{lemma}[A Projection Identity]\label{lem:keyidentityprop}
For any orthogonal projection $P$ of rank $k$ which satisfies $P1=0$, the following holds
    \begin{equation} \label{eq:keyidentity}
    \dfrac{1}{4}\sum\limits_{i\ne j}\left[ (P_{ii}+P_{jj}-2P_{ij})^2-(P_{ii}-P_{jj})^2\right]=k(k+1).
\end{equation}
\end{lemma}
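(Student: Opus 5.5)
The plan is to simplify the summand algebraically first, then extend the sum to all pairs $(i,j)$ and evaluate it using only three facts about $P$: $\tr P=k$, $P^2=P$ (so $\sum_{i,j}P_{ij}^2=\tr(P^2)=k$), and $P1=0$.

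\textbf{Factoring the summand.} Write $a=P_{ii}$, $b=P_{jj}$, $c=P_{ij}$. A difference of squares gives
\[
(a+b-2c)^2-(a-b)^2=(2b-2c)(2a-2c)=4(a-c)(b-c).
\]
Hence the left-hand side of \eqref{eq:keyidentity} equals $\sum_{i\ne j}(P_{ii}-P_{ij})(P_{jj}-P_{ij})$. When $i=j$ the factor $P_{ii}-P_{ij}$ is zero, so the diagonal terms contribute nothing. We may therefore sum over all ordered pairs $(i,j)\in[n]^2$.

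\textbf{Expanding and evaluating.} Expanding the product gives four double sums:
\[
\sum_{i,j}P_{ii}P_{jj}-\sum_{i,j}P_{ii}P_{ij}-\sum_{i,j}P_{ij}P_{jj}+\sum_{i,j}P_{ij}^2 .
\]
\begin{itemize}
\item The first sum is $(\tr P)^2=k^2$.
\item The second sum is $\sum_i P_{ii}\,(P1)_i$, which is $0$ because $P1=0$.
\item The third sum vanishes the same way, using the symmetry of $P$.
\item The fourth sum is $\tr(P^TP)=\tr(P^2)=\tr P=k$.
\end{itemize}
The total is $k^2+k=k(k+1)$, which is the claimed identity.

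\textbf{Main obstacle.} There is no real obstacle. The only step needing care is recognizing that the summand factors as a product, and checking that the diagonal terms vanish so the restriction $i\ne j$ can be dropped.
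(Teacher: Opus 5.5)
Your proof is correct and takes essentially the same route as the paper. Both arguments extend the sum to all ordered pairs, since the diagonal terms vanish, then expand and evaluate using $\tr P=k$, $\sum_{i,j}P_{ij}^2=\tr(P^2)=k$, and the zero row sums coming from $P1=0$. The only difference is that you factor the summand as $4(P_{ii}-P_{ij})(P_{jj}-P_{ij})$ before expanding, which spares you the paper's computation of the two $2n\sum_i P_{ii}^2$ terms that cancel on subtraction.
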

\begin{proof}
Let $a_{ij}=P_{ii}+P_{jj}-2P_{ij}$. Since $P^2=P$, $\tr(P)=k$, and $P1=0$, we have
\[
\sum\limits_{i,j} a_{ij}^2
=2n\sum\limits_i P_{ii}^2+2k^2+4k.
\]
Indeed, the mixed term
\[
\sum\limits_{i,j}(P_{ii}+P_{jj})P_{ij}
\]
vanishes because every row and column sum of $P$ is zero, while $\sum_{i,j}P_{ij}^2=\tr(P^2)=k$. Also,
\[
\sum\limits_{i,j}(P_{ii}-P_{jj})^2
=2n\sum\limits_i P_{ii}^2-2k^2.
\]
Subtracting gives
\[
\sum\limits_{i,j}\left[a_{ij}^2-(P_{ii}-P_{jj})^2\right]=4k(k+1).
\]
The terms with $i=j$ vanish, so the desired identity follows.
\end{proof}

The following lemma is the key technical ingredient in our proof.

\begin{lemma}\label{lem:minequalityprop}
Let $P$ be an orthogonal projection of rank $k$ which satisfies $P1=0$. Define the matrix $M$ with diagonal entries $M_{ii}=0$ and off-diagonal entries $M_{ij}=P_{ii}+P_{jj}-2P_{ij}-1$. Let $v=CM1$. Then
\begin{equation}\label{eq:Minequality}
    \sum\limits_{i\ne j}(1-|M_{ij}|)^2\ge \dfrac{2}{n}\norm{v}^2.
\end{equation}
\end{lemma}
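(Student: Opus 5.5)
The plan is to turn \eqref{eq:Minequality} into a statement about the Gram vectors $x_i=Pe_i$, compare its two sides using Lemma~\ref{lem:keyidentityprop}, and then bound one remaining quantity.

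\textbf{Step 1: compute $v$.} Write $p_i=P_{ii}$ and $a_{ij}=P_{ii}+P_{jj}-2P_{ij}=\norm{Pe_i-Pe_j}^2$, so that $M_{ij}=a_{ij}-1$ for $i\neq j$. Since $P1=0$ and $\tr P=k$,
\[
\sum_j a_{ij}=np_i+k .
\]
Because $a_{ii}=0$, this gives $(M1)_i=np_i+k-(n-1)$. Centering removes the constant, so $v_i=n(p_i-k/n)$. Hence
\[
\tfrac{2}{n}\norm{v}^2=2n\sum_i p_i^2-2k^2=\sum_{i,j}(p_i-p_j)^2 .
\]
The inequality to prove therefore becomes
\[
\sum_{i\ne j}\bigl(1-|a_{ij}-1|\bigr)^2\ \ge\ \sum_{i\ne j}(p_i-p_j)^2 .
\]

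\textbf{Step 2: a pointwise comparison fails.} One might hope for $1-|a_{ij}-1|\ge |p_i-p_j|$ termwise, using that the $2\times 2$ principal minors of $P$ satisfy $0\preceq \cdot\preceq I$. This is false: the minor $\begin{pmatrix}0.5&0.2\\0.2&0.1\end{pmatrix}$ is admissible but violates it. So the argument has to be global.

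\textbf{Step 3: use the projection identity.} Note $a_{ij}\in[0,2]$. For such $a$ one has $\min(a,2-a)^2=a^2-4(a-1)_+$. Lemma~\ref{lem:keyidentityprop} gives
\[
\sum_{i\ne j}(p_i-p_j)^2=\sum_{i\ne j}a_{ij}^2-4k(k+1).
\]
Substituting both into the reformulation of Step 1, the lemma is equivalent to
\[
\sum_{i\ne j}(a_{ij}-1)_+\ \le\ k(k+1),
\qquad\text{equivalently}\qquad
\sum_{\{i,j\}}(a_{ij}-1)_+\le \binom{k+1}{2}
\]
over unordered pairs.

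\textbf{Step 4: the main obstacle.} Let $F$ be the graph of pairs with $a_{ij}>1$. The left side above is exactly $\tr(PL_F)-|E(F)|$. So this step is the projection form of Brouwer's bound (Lemma~\ref{lem:projectionreduction}) for the worst graph $F$ associated to $P$. It cannot be proved by citing Brouwer, since that would be circular.

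\emph{What I would try first.} Exploit the structure of $F$: raising entries only helps, so one may choose $F$ to be a split-type graph relative to an ordering of the $p_i$. Then I would use the duality $\tr(PX)\le\norm{X}_*$ for $X=L_H-|K|C$. Note that $\tr(PC)=k$, so
\[
\tr(PL_H)\le |K|k+\norm{L_H-|K|C}_*\le |K|k+|K||S|,
\]
by Lemma~\ref{lem:nuclearnormresult}. This has to be balanced against $|E(H)|=\binom{|K|}{2}+E(K,S)$ by a good choice of $|K|$ (presumably near $k$), possibly after averaging over several split graphs.

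I am not confident that $F$ can be forced into a split graph or a mixture of split graphs. That reduction is where I expect the real difficulty to lie.
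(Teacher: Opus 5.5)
Your Steps 1--3 are correct, but they only show that the lemma is \emph{equivalent} to $\sum_{i\ne j}(a_{ij}-1)_+\le k(k+1)$. That is the paper's inequality \eqref{eq:mainineq}, i.e.\ the projection form of Brouwer's conjecture itself; the paper runs this equivalence in the opposite direction, deducing \eqref{eq:mainineq} from the lemma. Step 4 then leaves that inequality unproved, so the proposal contains no proof of the statement.

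Your first attempt in Step 4 also loses too much. Bounding $\tr(PX)\le\norm{X}_*$ for $X=L_H-|K|C$ ignores the fact that $\tr X=2E(K,S)-|K||S|$. The sharp bound is $\tr(PX)\le\frac12(\norm{X}_*+\tr X)\le E(K,S)$, i.e.\ $\tr(PL_H)\le |K|k+E(K,S)$. This does give $\tr(PL_H)-|E(H)|\le |K|k-\binom{|K|}{2}\le\binom{k+1}{2}$ for split $H$. But, as you suspect, the worst graph $F$ need not be split. Your claim that ``raising entries only helps'' lets you replace $F$ by a split graph is unjustified, and the paper never attempts such a reduction.

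The missing idea is to avoid the $(a_{ij}-1)_+$ form altogether. Instead, prove the quadratic inequality \eqref{eq:Minequality} through a \emph{linear} cut (routing) inequality: for every $x\in\R^n$,
\[
\langle x,v\rangle\le\sum_{i<j}(1-|M_{ij}|)\,|x_i-x_j|.
\]
Sort $x_1\ge\dots\ge x_n$ and telescope, using $\sum_i v_i=0$. It then suffices to check the prefix cuts $\sum_{i\le r}v_i\le\sum_{i\le r<j}(1-|M_{ij}|)$.

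For each $r$, take the split graph $G_r$ with clique $\{1,\dots,r\}$, independent set $\{r+1,\dots,n\}$, and a cross edge wherever $M_{ij}\ge 0$. A direct computation gives
\[
\sum_{i\le r}v_i+\sum_{i\le r<j}|M_{ij}|=-\tr\bigl((L_{G_r}-rC)N\bigr),
\]
where $N=CMC=C-2P$ has operator norm $1$. So Lemma~\ref{lem:nuclearnormresult} bounds the right side by $r(n-r)$. This is the same trace information as the sharp bound above, obtained by pairing with $C-2P$ rather than $P$.

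Finally, put $x=v$ and apply Cauchy--Schwarz together with $\sum_{i<j}(v_i-v_j)^2=n\norm{v}^2$. This gives $\sum_{i<j}(1-|M_{ij}|)^2\ge\norm{v}^2/n$.

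So the ``averaging over split graphs'' you anticipate does occur, but not where you placed it. It happens over the level cuts of $v$, in this $\ell_1$ routing form, and the $\ell_2$ statement is recovered only at the end by Cauchy--Schwarz.
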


\begin{remark}[The cut/routing viewpoint]

Observe that if the left hand side has no absolute value on $M_{ij}$, then the lower bound above is an easy ``row-sum'' estimate. Indeed,  $W_{ij}=1-M_{ij}\quad (i\neq j),\qquad W_{ii}=0$, 
we have
$CW\mathbf 1=-CM\mathbf 1=-v$. 
Thus,
\[
\sum_{i\ne j}(1-M_{ij})^2
=
2\sum_{i<j}W_{ij}^2
\ge
\frac{2}{n}\|CW\mathbf 1\|^2
=
\frac{2}{n}\|v\|^2.
\]

The lemma shows that the \emph{same} lower bound remains true even after
replacing $1-M_{ij}$ by the smaller quantity $1-|M_{ij}|$. Our final proof is short but let us describe the intuition behind. 

First, observe that it is enough to prove (the formally stronger) claim that for every $x$, $|\langle x,v\rangle| \le \sum_{i<j}(1-|M_{ij}|) |x_i-x_j|$. The inequality we want then follows by plugging in $x= v$. This inequality has a pleasing and familiar looking form of ``capacitated routing'' with capacities $c_{ij} = (1-|M_{ij}|)$ on the complete graph (see, e.g.,~\cite[Theorem~5.19]{ChekuriNotes}). To verify the inequality for a given $x$, one need only check the ``level'' cuts: sort the vertices of $x$ and take all cuts formed by prefixes $U_1, U_2,\ldots,U_{n-1}$ in this ordering. We then observe that for any $r$, our proof reduces this problem to the nuclear norm estimate from \prettyref{lem:nuclearnormresult} for the split graph with a clique on $U_r$. This relies on the simple but key identity in ~\eqref{eq:prefixtraceidentity}. 
\end{remark}

\begin{proof}[Proof of \prettyref{lem:minequalityprop} via \prettyref{lem:final} below]
If $v=0$, the result is immediate. Let $v \neq 0$ in the following. 
Then, \prettyref{eq:Minequality} follows from \prettyref{lem:final} by Cauchy-Schwarz:
$$n\norm{v}^2\cdot  \sum\limits_{i< j}(1-|M_{ij}|)^2= \sum\limits_{i< j}(1-|M_{ij}|)^2\cdot\sum\limits_{i< j}|v_i-v_j|^2\ge \left ( \sum\limits_{i<j}(1-|M_{ij}|)\cdot |v_i-v_j|\right )^2\ge \norm{v}^4.$$

Now, observe that $\sum_{i<j} (v_i-v_j)^2 = n \norm{v}^2$. Dividing by $n\norm{v}^2$ gives
\[
        \sum_{i<j}(1-|M_{ij}|)^2\geq \frac{1}{n}\norm{v}^2,
\]
which is equivalent to \prettyref{eq:Minequality}.
\end{proof}

\begin{lemma}[A Routing Inequality for $M$]\label{lem:final}
Let $P$ be an orthogonal projection of rank $k$ which satisfies $P1=0$. Define the matrix $M$ with diagonal entries $M_{ii}=0$ and off-diagonal entries $M_{ij}=P_{ii}+P_{jj}-2P_{ij}-1$. Let $N=CMC$ and $v=CM1$.
For any $x\in \R^n$ the following inequality holds
$$|\langle x,v\rangle |\le \sum\limits_{i<j}(1-|M_{ij}|)\cdot |x_i-x_j|.$$
In particular,
$$ \sum\limits_{i<j}(1-|M_{ij}|)|v_i-v_j|\ge \norm{v}^2.$$
\end{lemma}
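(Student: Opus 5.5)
The first step is to reduce the statement to level cuts, following the remark before \prettyref{lem:minequalityprop}. Since $C1=0$ we have $\langle 1,v\rangle=0$, so $\langle x,v\rangle$ is unchanged when a constant is added to $x$. Sort the coordinates as $x_{\sigma(1)}\ge\dots\ge x_{\sigma(n)}$ and let $U_r=\{\sigma(1),\dots,\sigma(r)\}$. Up to a constant, $x=\sum_{r=1}^{n-1}(x_{\sigma(r)}-x_{\sigma(r+1)})1_{U_r}$. Both sides of the inequality then decompose with the same nonnegative weights $x_{\sigma(r)}-x_{\sigma(r+1)}$: the left side by linearity, the right side because $|x_i-x_j|$ is the sum of these gaps over the cuts separating $i$ and $j$. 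So it suffices to prove, for every $U\subseteq[n]$,
$$|\langle 1_U,v\rangle|\le \sum_{i\in U,\,j\notin U}(1-|M_{ij}|).$$
The ``in particular'' statement is then the case $x=v$, because $\langle v,v\rangle=\norm{v}^2$.

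Next I compute $v$ explicitly. Using $P1=0$ and $\tr P=k$, the row sums are $(M1)_i=nP_{ii}+k-(n-1)$. Hence $v_i=nP_{ii}-k$ and $\langle 1_U,v\rangle=n\,\tr_U(P)-uk$, where $u=|U|$, $s=n-u$ and $\tr_U(P)=\sum_{i\in U}P_{ii}$. Write $W=\sum_{i<j\in U}(M_{ij}+1)$ and $X=\sum_{i\in U,j\notin U}(M_{ij}+1)$. Expanding both sums with $1_U^TP1_{U^c}=-1_U^TP1_U$ should give the prefix identity
$$\langle 1_U,v\rangle = 2W+X-2uk .$$
For any set $F$ of cross pairs, let $H_F$ be the split graph with clique $U$, independent set $U^c$ and cross edges $F$. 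Then $\tr(PL_{H_F})=W+\sum_{F}(M_{ij}+1)$ and $\tr(PuC)=uk$. Since $\norm{P}\le 1$, \prettyref{lem:nuclearnormresult} yields
$$\Bigl|W+\sum_{F}(M_{ij}+1)-uk\Bigr|\le us .$$
The same bound holds with $P$ replaced by the projection $C-P$, and with the roles of $U$ and $U^c$ exchanged, using $\langle 1_U,v\rangle=-\langle 1_{U^c},v\rangle$.

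Substituting into the prefix identity, each choice of $F$ and each sign produces a per-pair bound, since $us=\sum_{\text{cross}}1$. Taking $F$ to be all cross pairs gives $\langle 1_U,v\rangle\le\sum(1-M_{ij})$, and taking $F=\emptyset$ gives $\langle 1_U,v\rangle\ge \sum (M_{ij}-1)$. These are only the ``easy'' row-sum bounds. The main obstacle is to extract $1-|M_{ij}|$ pair by pair. The natural choice is $F=\{M_{ij}\ge 0\}$, so that pairs with $M_{ij}\ge 0$ contribute $1-M_{ij}$ and the others should contribute $1+M_{ij}$. However, the straightforward substitution leaves an excess of $2$ on non-$F$ pairs.

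I would try to absorb this excess as follows. Combine the bound for $(P,U,F)$ with the bound for $(C-P,U^c,F^c)$, or for $(P,U^c,F^c)$, and average. Here $\tr((C-P)L_H)=2|E(H)|-\tr(PL_H)$ supplies the missing $\pm 2$ per cross pair. The step needing verification is that this averaging makes the non-$F$ contributions collapse to $1+M_{ij}$. If it does not, a fallback is to apply \prettyref{lem:nuclearnormresult} to a different centering of the split graph, which gives a weighted cut bound.
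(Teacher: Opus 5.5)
Your reduction to level cuts, the formula $v_i=nP_{ii}-k$, and the prefix identity $\langle 1_U,v\rangle=2W+X-2uk$ are all correct. The central step, however, has a genuine gap, and the proposed averaging cannot close it. The loss comes from pairing $A_F:=L_{H_F}-uC$ with $P$ and with $C-P$ \emph{separately}, via $|\mathrm{tr}(PA_F)|\le\norm{A_F}_*\norm{P}$, which costs a factor of $2$. The extra combinations you suggest add nothing new. The complement of $H_F$ is exactly the split graph with clique $U^c$ and cross edges $F^c$, and $L_{\overline{H_F}}=nC-L_{H_F}$, so $L_{\overline{H_F}}-sC=-A_F$. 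Hence all you have is $|\mathrm{tr}(PA_F)|\le us$ and $|\mathrm{tr}((C-P)A_F)|\le us$. Using $\mathrm{tr}A_F=2|F|-us$, these give at best $\mathrm{tr}(PA_F)\le\min(us,2|F|)$. Averaging gives $\mathrm{tr}(PA_F)\le |F|+us/2$, which leaves an excess of $1$ on every cross pair. No combination of these inequalities can do better. Take two cross pairs with $M$-values $1$ and $-1$ and $W-uk=0$: every one of the above inequalities holds for all four choices of $F$, yet $\langle 1_U,v\rangle=2>0=\sum_{\mathrm{cross}}(1-|M_{ij}|)$. The fallback (a different centering) is too vague to evaluate.

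The missing idea is to pair $A_F$ with the single contraction $N=CMC=C-2P$, which the statement defines for exactly this purpose. It has eigenvalue $-1$ on $\mathrm{Im}\,P$, $1$ on $1^\perp\cap(\mathrm{Im}\,P)^\perp$, and $0$ on $1$, so $\norm{N}=1$. Since $A_F=A_FC$, you get $2\,\mathrm{tr}(PA_F)-\mathrm{tr}A_F=-\mathrm{tr}(A_FN)\le\norm{A_F}_*\le us$, i.e. $\mathrm{tr}(PA_F)\le|F|$. This has half the slack of your bound; it is exactly the inequality of \prettyref{lem:traceresult}, obtained here directly from \prettyref{lem:nuclearnormresult}. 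In your notation it reads $W-uk\le-\sum_{F}M_{ij}$. Substituting into the prefix identity with $F=\{M_{ij}\ge 0\}$ gives $\langle 1_U,v\rangle\le\sum_{\mathrm{cross}}\bigl(1+M_{ij}-2(M_{ij})_+\bigr)=\sum_{\mathrm{cross}}(1-|M_{ij}|)$ with no excess. Applying the same bound to $U^c$ gives the other sign. This is the paper's identity $\langle 1_U,v\rangle+\sum_{\mathrm{cross}}|M_{ij}|=-\mathrm{tr}((L_{G_r}-rC)N)$ in different notation. With this step in place, the rest of your argument goes through as written.
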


\begin{proof}
    For $i\ne j$, we have
    \[
        P_{ii}+P_{jj}-2P_{ij}=\norm{P(e_i-e_j)}^2\in [0,2],
    \]
    and hence $1-|M_{ij}|\ge 0$.
    We now compute the norm of $N$. Let $d=(P_{11},\ldots,P_{nn})^T$. Since
    \[
        M=d1^T+1d^T-2P-J+I,
    \]
    and $C1=0$, $CP=PC=P$, we get
    \[
        N=CMC=C-2P.
    \]
    Thus $N$ has eigenvalue $-1$ on $\operatorname{Im}P$, eigenvalue $1$ on
    $1^\perp\cap(\operatorname{Im}P)^\perp$, and eigenvalue $0$ on
    $\operatorname{span}\{1\}$. In particular, $\norm{N}=1$.

    It is enough to prove the one-sided estimate 
    $$\langle x,v\rangle \le \sum\limits_{i<j}(1-|M_{ij}|)\cdot |x_i-x_j|.$$
    By relabeling the coordinates, we may assume without loss of generality that $x_1\ge x_2\ge \ldots \ge x_n$.

    For each $1\le r\le n-1$, let $G_r$ be the split graph on $[n]$ obtained by making $\{1,2,\ldots,r\}$ a clique, $\{r+1,\ldots, n\}$ an independent set, and adding an edge across the cut wherever $M_{ij}\ge 0$. We first prove the identity
    \begin{equation}\label{eq:prefixtraceidentity}
    \sum\limits_{i=1}^{r}v_i+\sum\limits_{i=1}^{r}\sum\limits_{j=r+1}^{n} |M_{ij}|=-\tr\left ((L_{G_r}-rC)N\right ).
    \end{equation}
    Since $L_{G_r}1=C1=0$ and $N=CMC$, the right hand side is
    \[
        -\tr\left ((L_{G_r}-rC)M\right ).
    \]
    The diagonal entries of $M$ are zero, and therefore
    \[
        -\tr\left ((L_{G_r}-rC)M\right )
        =
        2\sum\limits_{ij\in E(G_r)}M_{ij}+r\tr(CM).
    \]
    Writing $K_r=\{1,\ldots,r\}$, the last expression is
    \[
        2\sum\limits_{\substack{i<j\\ i,j\in K_r}}M_{ij}
        +2\sum\limits_{\substack{i\le r<j\\ M_{ij}\ge 0}}M_{ij}
        -\dfrac{r}{n}1^TM1.
    \]
    On the other hand,
    \[
        \sum\limits_{i=1}^{r}v_i
        =
        2\sum\limits_{\substack{i<j\\ i,j\in K_r}}M_{ij}
        +\sum\limits_{i=1}^{r}\sum\limits_{j=r+1}^{n}M_{ij}
        -\dfrac{r}{n}1^TM1.
    \]
    Adding $\sum_{i=1}^{r}\sum_{j=r+1}^{n}|M_{ij}|$ and using
    $M_{ij}+|M_{ij}|=2(M_{ij})_+$ proves \prettyref{eq:prefixtraceidentity}.

    By \prettyref{lem:nuclearnormresult} and $\norm{N}=1$, \prettyref{eq:prefixtraceidentity} gives
    \[
    \sum\limits_{i=1}^{r}v_i+\sum\limits_{i=1}^{r}\sum\limits_{j=r+1}^{n} |M_{ij}|
    \le \norm{L_{G_r}-rC}_* \cdot \norm{N}\le r(n-r).
    \]
    Hence
    \begin{equation}\label{eq:prefixineq}
        \sum\limits_{i=1}^{r}v_i\le
        \sum\limits_{i=1}^{r}\sum\limits_{j=r+1}^{n}(1-|M_{ij}|).
    \end{equation}
    Since $\sum_i v_i=0$, telescoping gives
    \[
        \langle x,v\rangle
        =
        \sum\limits_{r=1}^{n-1}(x_r-x_{r+1})\sum\limits_{i=1}^{r}v_i.
    \]
    Combining this with \prettyref{eq:prefixineq}, and using $x_r-x_{r+1}\ge 0$, we obtain
    \[
        \langle x,v\rangle
        \le
        \sum\limits_{r=1}^{n-1}(x_r-x_{r+1})
        \sum\limits_{i=1}^{r}\sum\limits_{j=r+1}^{n}(1-|M_{ij}|)
        =
        \sum\limits_{i<j}(1-|M_{ij}|)(x_i-x_j).
    \]
    This is the desired one-sided estimate. Applying it to $-x$ gives the absolute-value form.
\end{proof}

\begin{lemma}\label{lem:strongmatrixineq}
For any orthogonal projection $P$ of rank $k$ which satisfies $P1=0$, the following holds
 \begin{equation}\label{eq:mainineq}
    \sum\limits_{i\ne j}(P_{ii}+P_{jj}-2P_{ij}-1)_+\le k(k+1).
 \end{equation}
In particular, \prettyref{lem:nuclearnormresult} implies Brouwer's Laplacian conjecture.
\end{lemma}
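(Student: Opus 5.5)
The plan is to combine the projection identity of \prettyref{lem:keyidentityprop} with the lower bound of \prettyref{lem:minequalityprop}, after rewriting both in terms of the matrix $M$. Set $M_{ij}=P_{ii}+P_{jj}-2P_{ij}-1$ for $i\ne j$, so that $P_{ii}+P_{jj}-2P_{ij}=1+M_{ij}$.

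\textbf{Step 1: a pointwise identity.} For any real $t$ we have $2t-4t_+=-2|t|$. Hence
\[
(1+t)^2-4t_+=1+2t+t^2-4t_+=(1-|t|)^2 .
\]
Applying this with $t=M_{ij}$ and summing over $i\ne j$, \prettyref{lem:keyidentityprop} becomes
\[
4k(k+1)=\sum_{i\ne j}(1+M_{ij})^2-\sum_{i\ne j}(P_{ii}-P_{jj})^2
=4\sum_{i\ne j}(M_{ij})_+ +\sum_{i\ne j}(1-|M_{ij}|)^2-\sum_{i\ne j}(P_{ii}-P_{jj})^2 .
\]
So \eqref{eq:mainineq} is equivalent to
\[
\sum_{i\ne j}(1-|M_{ij}|)^2\ge \sum_{i\ne j}(P_{ii}-P_{jj})^2 .
\]

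\textbf{Step 2: identify the right-hand side with $\tfrac{2}{n}\norm{v}^2$.} Write $d=(P_{11},\ldots,P_{nn})^T$. As in the proof of \prettyref{lem:final}, $M=d1^T+1d^T-2P-J+I$. Using $P1=0$ and $1^Td=k$, this gives
\[
M1=nd+k1-n1+1 ,
\]
and therefore $v=CM1=nCd$. On the other hand,
\[
\sum_{i,j}(d_i-d_j)^2=2n\norm{d}^2-2(1^Td)^2=2n\norm{Cd}^2=\tfrac{2}{n}\norm{v}^2 .
\]
The required inequality from Step 1 is then exactly \eqref{eq:Minequality} of \prettyref{lem:minequalityprop}, which proves \eqref{eq:mainineq}. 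Essentially all the difficulty is hidden in \prettyref{lem:minequalityprop}, and hence in \prettyref{lem:nuclearnormresult}. The only delicate point here is spotting the pointwise identity in Step 1 that converts $(M_{ij})_+$ into $(1-|M_{ij}|)^2$.

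\textbf{Step 3: deduce Brouwer's conjecture.} Let $P$ be any rank-$k$ projection with $P1=0$. Since $M$ is symmetric,
\[
\sum_{(i,j)\in E(G)}M_{ij}\le \sum_{i<j}(M_{ij})_+=\tfrac12\sum_{i\ne j}(M_{ij})_+\le \tfrac{k(k+1)}{2}=\binom{k+1}{2}.
\]
This is precisely the hypothesis of \prettyref{lem:projectionreduction}, which yields the bound for $1\le k\le n-1$. The case $k=n$ follows because $\lambda_n=0$ and $\binom{n+1}{2}\ge\binom{n}{2}$.
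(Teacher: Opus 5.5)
Your proposal is correct and follows the paper's proof essentially step for step. Your pointwise identity $(1+t)^2-4t_+=(1-|t|)^2$ is the paper's $(x-1)_+=\frac14\bigl(x^2-(1-|x-1|)^2\bigr)$ with $x=1+t$, your computation $v=nCd$ giving $\sum_{i\ne j}(P_{ii}-P_{jj})^2=\frac{2}{n}\norm{v}^2$ is the same as the paper's, and Step 3 is the paper's edge-sum bound fed into \prettyref{lem:projectionreduction}.

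The only cosmetic difference is the case $k=n$. You deduce it from the case $k=n-1$ using $\lambda_n=0$, with $n=1$ trivial. The paper instead uses $\sum_i\lambda_i=2m$ and $m\le\binom{n}{2}$. Both arguments are valid.
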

\begin{remark}
Observe that for any rank $k$ projection matrix $P$, the graph $G$ of pairs $i,j$ such that $i \neq j$ and $(P_{ii}+P_{jj}-2P_{ij}-1)>0$ gives the ``worst-case'' graph, i.e., maximizes the projection expression in~\prettyref{lem:projectionreduction}. 
\end{remark}
\begin{proof}
Using \prettyref{lem:keyidentityprop} and the fact that $(x-1)_+=\frac{1}{4}\left (x^2-(1-|x-1|)^2\right )$, inequality \prettyref{eq:mainineq} is equivalent to
\begin{equation}
\sum\limits_{i\ne j} \left ( 1-|P_{ii}+P_{jj}-2P_{ij}-1|\right )^2\ge \sum\limits_{i\ne j} (P_{ii}-P_{jj})^2. \label{eq:intermediate}
\end{equation}

Define the matrix $M$ with diagonal entries $M_{ii}=0$ and off-diagonal entries $M_{ij}=P_{ii}+P_{jj}-2P_{ij}-1$. Then,  $M\mathbf 1
=
n(P_{11},\ldots,P_{nn})^T+(k-n+1)\mathbf 1$, 
and hence $v=CM\mathbf 1=nC(P_{11},\ldots,P_{nn})^T$.
Thus,  $v_i-v_j=n(P_{ii}-P_{jj})$.

Finally, $v\perp \mathbf 1$,
\[
\sum_{i\ne j}(P_{ii}-P_{jj})^2
=
\frac1{n^2}\sum_{i\ne j}(v_i-v_j)^2
=
\frac2n\norm{v}^2.
\]
Thus the \prettyref{eq:intermediate} is exactly \prettyref{eq:Minequality}, which holds by \prettyref{lem:minequalityprop}.

The case $k=n$ is immediate from $\sum_{i=1}^{n}\lambda_i=2m$ and $m\le \binom{n}{2}\le \binom{n+1}{2}$. 

Now, let $1\le k\le n-1$. For any orthogonal projection $P$ of rank $k$ satisfying $P1=0$, \prettyref{eq:mainineq} gives
\begin{align*}
\sum\limits_{(i,j)\in E(G)}(P_{ii}+P_{jj}-2P_{ij}-1)
&\le \sum\limits_{(i,j)\in E(G)}(P_{ii}+P_{jj}-2P_{ij}-1)_+\\
&\le \dfrac{1}{2}\sum\limits_{i\ne j}(P_{ii}+P_{jj}-2P_{ij}-1)_+\\
&\le \dbinom{k+1}{2}.
\end{align*}

\end{proof}

Brouwer's Laplacian conjecture follows by combining \prettyref{lem:projectionreduction} with \prettyref{lem:strongmatrixineq}.

\section*{Statement of AI Use}
We used GPT 5.5 Pro to simplify our proof that Brouwer's Inequality implies the Grone-Merris-Bai theorem. We also used it for a proofreading of our manuscript. 

\bibliographystyle{alpha}
\bibliography{refs}

@article{Bai,
  title={{The Grone-Merris conjecture}},
  author={Bai, Hua},
  journal={Transactions of the American Mathematical Society},
  volume={363},
  number={8},
  pages={4463--4474},
  year={2011},
  doi={10.1090/S0002-9947-2011-05393-6},
  url={https://ams.org}
}

@book{BrouwerHaemers2012,
  title={Spectra of Graphs},
  author={Brouwer, Andries E. and Haemers, Willem H.},
  series={Universitext},
  publisher={Springer},
  address={New York},
  year={2012},
  doi={10.1007/978-1-4614-1939-6}
}

@article{HaemersMohammadianTayfehRezaie2010,
  title={On the sum of Laplacian eigenvalues of graphs},
  author={Haemers, Willem H. and Mohammadian, Ali and Tayfeh-Rezaie, Behruz},
  journal={Linear Algebra and its Applications},
  volume={432},
  number={9},
  pages={2214--2221},
  year={2010},
  doi={10.1016/j.laa.2009.03.038}
}

@article{FritscherHoppenRochaTrevisan2011,
  title={On the sum of the Laplacian eigenvalues of a tree},
  author={Fritscher, Eliseu and Hoppen, Carlos and Rocha, Israel and Trevisan, Vilmar},
  journal={Linear Algebra and its Applications},
  volume={435},
  number={2},
  pages={371--399},
  year={2011},
  doi={10.1016/j.laa.2011.01.036}
}

@article{DuZhou2012,
  title={Upper bounds for the sum of Laplacian eigenvalues of graphs},
  author={Du, Zhibin and Zhou, Bo},
  journal={Linear Algebra and its Applications},
  volume={436},
  number={9},
  pages={3672--3683},
  year={2012},
  doi={10.1016/j.laa.2012.01.007}
}

@article{WangHuangLiu2012,
  title={On a conjecture for the sum of Laplacian eigenvalues},
  author={Wang, Shouzhong and Huang, Yufei and Liu, Bolian},
  journal={Mathematical and Computer Modelling},
  volume={56},
  number={3--4},
  pages={60--68},
  year={2012},
  doi={10.1016/j.mcm.2011.12.047}
}

@mastersthesis{Mayank2010,
  title={On variants of the Grone--Merris conjecture},
  author={{Mayank}},
  school={Eindhoven University of Technology},
  address={Eindhoven},
  year={2010}
}

@article{HelmbergTrevisan2017,
  title={Spectral threshold dominance, {Brouwer}'s conjecture and maximality of Laplacian energy},
  author={Helmberg, Christoph and Trevisan, Vilmar},
  journal={Linear Algebra and its Applications},
  volume={512},
  pages={18--31},
  year={2017},
  doi={10.1016/j.laa.2016.09.029}
}

@article{GanieAlghamdiPirzada2016,
  title={On the sum of the Laplacian eigenvalues of a graph and {Brouwer}'s conjecture},
  author={Ganie, Hilal A. and Alghamdi, A. M. and Pirzada, S.},
  journal={Linear Algebra and its Applications},
  volume={501},
  pages={376--389},
  year={2016},
  doi={10.1016/j.laa.2016.03.034}
}

@article{Chen2018,
  title={Improved results on {Brouwer}'s conjecture for sum of the Laplacian eigenvalues of a graph},
  author={Chen, Xiaodan},
  journal={Linear Algebra and its Applications},
  volume={557},
  pages={327--338},
  year={2018},
  doi={10.1016/j.laa.2018.08.003}
}

@article{Chen2019,
  title={On {Brouwer}'s conjecture for the sum of $k$ largest Laplacian eigenvalues of graphs},
  author={Chen, Xiaodan},
  journal={Linear Algebra and its Applications},
  volume={578},
  pages={402--410},
  year={2019},
  doi={10.1016/j.laa.2019.05.029}
}

@article{GaniePirzadaRatherTrevisan2020,
  title={Further developments on {Brouwer}'s conjecture for the sum of Laplacian eigenvalues of graphs},
  author={Ganie, Hilal A. and Pirzada, S. and Rather, Bilal A. and Trevisan, Vilmar},
  journal={Linear Algebra and its Applications},
  volume={588},
  pages={1--18},
  year={2020},
  doi={10.1016/j.laa.2019.11.020}
}

@article{Cooper2021,
  title={Constraints on {Brouwer}'s Laplacian spectrum conjecture},
  author={Cooper, Joshua N.},
  journal={Linear Algebra and its Applications},
  volume={615},
  pages={11--27},
  year={2021},
  doi={10.1016/j.laa.2020.12.028}
}

@article{TorresTrevisan2024,
  title={{Brouwer}'s conjecture for the Cartesian product of graphs},
  author={Torres, Guilherme S. and Trevisan, Vilmar},
  journal={Linear Algebra and its Applications},
  volume={685},
  pages={66--76},
  year={2024},
  doi={10.1016/j.laa.2023.12.019}
}

@misc{LewPartitionDensity2024,
  title={Partition density, star arboricity, and sums of Laplacian eigenvalues of graphs},
  author={Lew, Alan},
  year={2024},
  eprint={2410.04563},
  archivePrefix={arXiv},
  primaryClass={math.CO},
  doi={10.48550/arXiv.2410.04563}
}

@misc{LewSumsDegrees2025,
  title={Sums of Laplacian eigenvalues and sums of degrees},
  author={Lew, Alan},
  year={2025},
  eprint={2508.04209},
  archivePrefix={arXiv},
  primaryClass={math.CO},
  doi={10.48550/arXiv.2508.04209}
}

@misc{LewApproximate2026,
  title={An approximate version of {Brouwer}'s Laplacian conjecture},
  author={Lew, Alan},
  year={2026},
  eprint={2601.17575},
  archivePrefix={arXiv},
  primaryClass={math.CO},
  doi={10.48550/arXiv.2601.17575}
}

@misc{BakshiBasuKothariLi2026,
  title={Sharp Bounds on the Eigenvalues of {Kikuchi} Graphs and Applications to Quantum Max Cut},
  author={Bakshi, Ainesh and Basu, Arpon and Kothari, Pravesh K. and Li, Anqi},
  year={2026},
  eprint={2605.14994},
  archivePrefix={arXiv},
  primaryClass={quant-ph},
  doi={10.48550/arXiv.2605.14994}
}

@article{GroneMerrisSunder1990,
  title={The Laplacian spectrum of a graph},
  author={Grone, Robert and Merris, Russell and Sunder, V. S.},
  journal={SIAM Journal on Matrix Analysis and Applications},
  volume={11},
  number={2},
  pages={218--238},
  year={1990},
  doi={10.1137/0611016}
}

@article{Merris1994,
  title={Degree maximal graphs are Laplacian integral},
  author={Merris, Russell},
  journal={Linear Algebra and its Applications},
  volume={199},
  pages={381--389},
  year={1994},
  doi={10.1016/0024-3795(94)90361-1}
}

@article{GroneMerris1994,
  title={The Laplacian spectrum of a graph {II}},
  author={Grone, Robert and Merris, Russell},
  journal={SIAM Journal on Discrete Mathematics},
  volume={7},
  number={2},
  pages={221--229},
  year={1994},
  doi={10.1137/S0895480191222653}
}

@article{HammerKelmans1996,
  title={Laplacian spectra and spanning trees of threshold graphs},
  author={Hammer, Peter L. and Kelmans, Alexander K.},
  journal={Discrete Applied Mathematics},
  volume={65},
  number={1--3},
  pages={255--273},
  year={1996},
  doi={10.1016/0166-218X(94)00049-J}
}

@article{DuvalReiner2002,
  title={Shifted simplicial complexes are Laplacian integral},
  author={Duval, Art M. and Reiner, Victor},
  journal={Transactions of the American Mathematical Society},
  volume={354},
  number={11},
  pages={4313--4344},
  year={2002},
  doi={10.1090/S0002-9947-02-03082-9}
}

@article{Stephen2007,
  title={A majorization bound for the eigenvalues of some graph Laplacians},
  author={Stephen, Tamon},
  journal={SIAM Journal on Discrete Mathematics},
  volume={21},
  number={2},
  pages={303--312},
  year={2007},
  doi={10.1137/040619594}
}

@article{AbiadFiolHaemersPerarnau2014,
  title={An interlacing approach for bounding the sum of Laplacian eigenvalues of graphs},
  author={Abiad, Aida and Fiol, Miquel A. and Haemers, Willem H. and Perarnau, Guillem},
  journal={Linear Algebra and its Applications},
  volume={448},
  pages={11--21},
  year={2014},
  doi={10.1016/j.laa.2014.02.003}
}

@misc{ChekuriNotes,
  title={{CS 586/IE 519: Combinatorial Optimization}},
  author={Chekuri, Chandra},
  howpublished={Lecture notes, University of Illinois Urbana--Champaign},
  year={2022},
  note={See Theorem 5.19},
  url={https://courses.engr.illinois.edu/cs586/sp2022/main.pdf}
}
\end{document}